\documentclass[12pt]{article}
\usepackage[]{amsmath,amssymb}
\usepackage[]{amsthm,enumerate}
\usepackage{lscape} 

\newtheorem{theorem}{Theorem}
\newtheorem{proposition}[theorem]{Proposition}

\newtheorem{corollary}[theorem]{Corollary}
\theoremstyle{definition}

\theoremstyle{remark}
\newtheorem{remark}[theorem]{Remark}

\newcommand{\FF}{\mathbb{F}}
\DeclareMathOperator{\wt}{wt}


\begin{document}

\title{On binary codes related to mutually quasi-unbiased weighing
matrices
}

\date{\today}

\author{
Masaaki Harada\thanks{
Research Center for Pure and Applied Mathematics,
Graduate School of Information Sciences,
Tohoku University, Sendai 980--8579, Japan.
email: mharada@m.tohoku.ac.jp.}
and
Sho Suda\thanks{Department of Mathematics Education, Aichi University of Education,
1 Hirosawa, Igaya-cho, Kariya 448--8542, Japan.
email: suda@auecc.aichi-edu.ac.jp}
}

\maketitle

\vspace*{-1cm}
\begin{center}
{\bf Dedicated to Hadi Kharaghani on his 70th birthday}
\end{center}

\begin{abstract}
Some mutually quasi-unbiased weighing matrices are constructed from
binary codes satisfying 
that the number of non-zero weights of the code is four
and the code contains the first order Reed--Muller code.
Motivated by this, 
in this note, we study binary codes satisfying the conditions.
The weight distributions of binary codes satisfying the conditions are determined.
We also give a classification of binary codes of lengths $8,16$
and binary maximal codes of length $32$
satisfying the conditions.
As an application, sets of $8$ mutually quasi-unbiased 
weighing matrices for parameters $(16,16,4,64)$
and $4$ 
mutually quasi-unbiased weighing matrices for parameters $(32,32,4,256)$
are constructed for the first time.
\end{abstract}

\noindent
{\bf Keywords:}
weighing matrix, quasi-unbiased weighing matrix, binary code

\noindent
{\bf Mathematics Subject Classification:} 94B05, 05B20

\section{Introduction}

A weighing matrix of order $n$ and weight $k$ is an
$n \times n$ $(1,-1,0)$-matrix $W$ such that
$W W^T=kI_n$, where $I_n$ is
the identity matrix of order $n$ and $W^T$ denotes the transpose of $W$.
A weighing matrix of order $n$ and weight $n$
is a Hadamard matrix.

Two weighing matrices $W_1,W_2$ of order $n$ and weight $k$ are said to be 
{\em unbiased} if $(1/\sqrt{k})W_1 W_2^{T}$ is also 
a weighing matrix of order $n$ and weight $k$~\cite{HKO} (see also~\cite{BKR}).
Unbiased weighing matrices of order $n$ and weight $n$
are unbiased Hadamard matrices (see~\cite{HKO}). 
Weighing (Hadamard) matrices $W_1,W_2,\ldots,W_f$ are said to be 
{\em mutually unbiased}
if any distinct two of them are unbiased.
%
Generalizing the above concept, recently the concept ``quasi-unbiased'' for weighing matrices
has been introduced by Nozaki and the second author~\cite{NSpre}.
Namely,  two weighing matrices $W_1,W_2$ of order $n$ and weight $k$ are said to be  
{\em quasi-unbiased for parameters $(n,k,l,a)$} if
$(1/\sqrt{a}) W_1 W_2^T$ is a weighing matrix of weight $l$. 
It follows from the definition that $l=k^2/a$.
In addition, weighing matrices $W_1,W_2,\ldots,W_f$ are said to be 
{\em mutually quasi-unbiased weighing matrices 
for parameters $(n,k,l,a)$}  
if any distinct two of them are quasi-unbiased  for parameters $(n,k,l,a)$.
Mutually quasi-unbiased weighing matrices were defined
from the viewpoint of a connection with spherical codes~\cite{NSpre}.
This notion was introduced to show that Conjecture~23 in~\cite{BKR}
is true.
%
Only quasi-unbiased weighing matrices are previously known
for parameters $(n,n,n/2,2n)$, where $n=2^{2k+1}$ and $k$ is a positive
integer~\cite[Section~4]{BKR} and \cite[Section~4]{NSpre}, 
and for parameters $(n,2,4,1)$, where $n$ is an even positive
integer with $n \ge 4$~\cite[Section~3]{NSpre}.


Suppose that $n=2^m$, where $m$ is an integer with $m \ge 2$.
Let $C$ be a binary $[n,k]$ code satisfying the following two conditions:
\begin{align}
\label{eq:C1}
&\{i \in \{0,1,\ldots,n\}\mid A_i(C) \ne 0\}=\{0,n/2\pm a,n/2,n\}, \\
\label{eq:C2}
&\text{$C$ contains the first order Reed--Muller code $RM(1,m)$ as a
subcode},
\end{align}
where $A_i(C)$ denotes the number of codewords of weight $i$ in $C$,
and $a$ is a positive integer with $0<a<n/2$.
Then it follows from~\cite[Proposition~2.3 and Lemma~4.2]{NSpre}
that $C$ constructs a set of $2^{k-m-1}$ mutually quasi-unbiased 
weighing matrices for parameters $(n,n,(n/2a)^2,4a^2)$.

In this note, we study binary $[2^m,k]$ codes satisfying the two 
conditions~\eqref{eq:C1} and~\eqref{eq:C2}.
The weight distribution of the above code 
is determined using an integer $a$ given in~\eqref{eq:C1}.
We give a classification of binary codes $C$
satisfying the two conditions~\eqref{eq:C1} and~\eqref{eq:C2}
for lengths $8,16$.
We also give a classification of
binary maximal codes $C$ (with respect to the subspace relation)
satisfying the two conditions~\eqref{eq:C1} and~\eqref{eq:C2}
for length $32$.
As an application, sets of $8$ mutually quasi-unbiased 
weighing matrices for parameters $(16,16,4,64)$
and $4$ 
mutually quasi-unbiased weighing matrices for parameters $(32,32,4,256)$
are constructed for the first time.
All computer calculations in this note
were done by {\sc Magma}~\cite{Magma}.

\section{Mutually quasi-unbiased weighing matrices and codes} \label{sec:2}

We begin with definitions on codes used throughout this note.
A binary $[n,k]$ {\em code} $C$ is a $k$-dimensional vector subspace
of $\FF_2^n$,
where $\FF_2$ denotes the finite field of order $2$.
All codes in this note are binary.
A $k \times n$ matrix whose rows form a basis of 
$C$ is called a {\em generator matrix} of $C$.
The parameters $n$ and $k$ are called the {\em length} and the {\em dimension} of $C$,
respectively.
For a vector $x=(x_1,\ldots,x_n)$,
the set $\{i \mid x_i \ne 0\}$ is called 
the {\em support} of $x$.
The {\em weight} $\wt(x)$ of a vector $x$ is
the number of non-zero components of $x$.
The minimum non-zero weight of all codewords in $C$ is called
the {\em minimum weight} of $C$, which is denoted by $d(C)$.
Two codes $C$ and $C'$ are {\em equivalent} 
if one can be
obtained from the other by permuting the coordinates.
A code $C$ is {\em doubly even} (resp.\ {\em triply even})
if all codewords of $C$ have weight divisible by four (resp.\ eight).
The \textit{dual code} $C^{\perp}$ of a code
$C$ of length $n$ is defined as
$
C^{\perp}=
\{x \in \FF_2^n \mid x \cdot y = 0 \text{ for all } y \in C\},
$
where $x \cdot y$ is the standard inner product.
A code $C$ is called
{\em self-orthogonal} (resp.\ {\em self-dual}) if 
$C \subset C^{\perp}$ (resp.\ $C = C^{\perp}$).
A {\em covering radius} $\rho(C)$ of $C$ is 
$
\rho(C)=\max_{x\in\FF_2^n}\min_{c\in C}\wt(x-c).
$
The {\em first order Reed--Muller codes $RM(1,m)$} for all positive integer $m$  are defined recursively by
\begin{align*}
RM(1,1)&=\FF_2^2,\\
RM(1,m)&=\{(u,u),(u,u+{\bf 1}) \in \FF_2^{2^m}\mid u\in RM(1,m-1)\} \text{ for } m>1,
\end{align*}
where $\bf{1}$ denotes the all-one vector of suitable length. 


Mutually quasi-unbiased weighing matrices
for parameters $(n,n,(n/2a)^2,4a^2)$ are constructed from
$[n,k]$ codes $C$ 
satisfying the two conditions~\eqref{eq:C1} and~\eqref{eq:C2},
where $n=2^m$ and $m$ is a positive integer as 
follows~\cite[Proposition~2.3 and Lemma~4.2]{NSpre}.
Define $\psi$ as a map from $\mathbb{F}_2^n$ to 
$\{\pm1\}^n$ $(\subset \mathbb{Z}^n)$ by
 $\psi((x_i)_{i=1}^n)=(\alpha_i)_{i=1}^n$, where $\alpha_i=-1$ if $x_i=1$ and  $\alpha_i=1$ if $x_i=0$.
Note that $\wt(x-y)=j$ if and only if the standard inner product of
$\psi(x)$ and $\psi(y)$ is $n-2j$ for $x,y\in \mathbb{F}_2^n$.
Let $\{u_1,u_2,\ldots,u_{2^{k-m-1}}\}$ be a set of complete representatives of $C/RM(1,m)$. 
Since $\{i\in \{0,1,\ldots,n\} \mid A_i(RM(1,m)) \ne 0\}=\{0,n/2,n\}$,
$\psi(u_i+RM(1,m))$ is antipodal, that is, $-\psi(u_i+RM(1,m))=\psi(u_i+RM(1,m))$. 
Hence, there exists a subset $X_i$ of $\psi(u_i+RM(1,m))$ 
such that $X_i\cup(-X_i)=\psi(u_i+RM(1,m))$ and  $X_i\cap(-X_i)=\emptyset$.
For $1\leq i\leq 2^{k-m-1}$, define $H_i$ to be an $n\times n$
 $(1,-1)$-matrix whose rows consist of the vectors of $X_i$.  
Any two different vectors in $X_i$ are orthogonal 
for $1\leq i\leq 2^{k-m-1}$, which means that the matrix $H_i$ is a Hadamard matrix for $1\leq i\leq 2^{k-m-1}$.
Let $x_i$ be a vector in $X_i$.
The assumption of~\eqref{eq:C1}
implies that $\wt(\psi^{-1}(x_i)-\psi^{-1}(x_j))=n/2,n/2\pm a$ $(i \ne j)$,
namely,
the inner product of $x_i$ and $x_j$ $(i \ne j)$ is  $0,\mp 2a$ respectively, where $a$ is the integer given in~\eqref{eq:C1}.
This shows that for any distinct $i,j\in \{1,2,\ldots, 2^{k-m-1}\}$,
 $(1/2a)H_i H_j^T$ is a $(1,-1,0)$-matrix and thus it is a
 weighing matrix of weight $(n/2a)^2$. 
Therefore, Hadamard matrices $H_1,H_2,\ldots,H_{2^{k-m-1}}$ are mutually quasi-unbiased weighing matrices for parameters $(n,n,(n/2a)^2,4a^2)$.


\begin{remark}\label{rem}
Since $n/2a$ must be an integer, $a$ is a divisor of $2^{m-1}$.
\end{remark}

\begin{proposition} \label{prop:WD}
Suppose that $n=2^m$, where $m$ is an integer with $m \ge 2$.
Let $C$ be an $[n,k]$ code 
satisfying the two conditions~\eqref{eq:C1} and~\eqref{eq:C2}.
Then the weight distribution of $C$ is given by
\begin{align*}
(&A_0(C),A_{n/2-a}(C),A_{n/2}(C),A_{n/2+a}(C),A_n(C))\\&=(1,(2^{k-m-1}-1)l,2n-2+(2^{k-m-1}-1)(2n-2l),(2^{k-m-1}-1)l,1),
\end{align*}
where $l=(n/2a)^2$.
\end{proposition}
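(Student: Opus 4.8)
The plan is to compute the weight distribution of $C$ coset by coset over $RM(1,m)$. By condition~\eqref{eq:C2} and $\dim RM(1,m)=m+1$, there are exactly $2^{k-m-1}$ cosets, with representatives $u_1,\ldots,u_{2^{k-m-1}}$ as in Section~\ref{sec:2}, one of which represents the zero coset. Since $\{i\mid A_i(RM(1,m))\ne 0\}=\{0,n/2,n\}$, the zero coset $RM(1,m)$ contributes $A_0=1$, $A_{n/2}=2n-2$, and $A_n=1$. Because $\mathbf 1\in RM(1,m)$, every coset is closed under complementation $x\mapsto x+\mathbf 1$, so within any coset the number of words of weight $n/2-a$ equals the number of weight $n/2+a$; moreover a word of weight $0$ or $n$ lies only in the zero coset. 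Thus it suffices to show that each of the $2^{k-m-1}-1$ nonzero cosets contains exactly $l$ words of weight $n/2-a$, exactly $l$ of weight $n/2+a$, and the remaining $2n-2l$ of weight $n/2$; adding these to the zero coset's contribution then yields the stated distribution.

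Fix a nonzero coset $u+RM(1,m)$. By the identification recalled in Section~\ref{sec:2}, for $v\in RM(1,m)$ one has $\wt(u+v)=(n-\langle \psi(u),\psi(v)\rangle)/2$, where $\psi(u+v)=\psi(u)*\psi(v)$ is the componentwise product. As $v$ runs over $RM(1,m)$, the $2n$ vectors $\psi(v)$ are obtained as follows: any two distinct non-complementary codewords of $RM(1,m)$ differ by a word of weight $n/2$, so their $\psi$-images are orthogonal, while complementary pairs give antipodal images. Hence the $\psi(v)$ form $n$ antipodal pairs $\pm h_1,\ldots,\pm h_n$ with the $h_j\in\{\pm1\}^n$ pairwise orthogonal and $\|h_j\|^2=n$, i.e.\ an orthogonal basis of $\mathbb R^n$. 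Writing $c_j=\langle \psi(u),h_j\rangle$, condition~\eqref{eq:C1} forces every inner product $\langle \psi(u),\pm h_j\rangle=\pm c_j$ into $\{2a,0,-2a\}$, so each $c_j\in\{2a,0,-2a\}$.

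The crux is to count the nonzero $c_j$. Parseval's identity in the orthogonal basis $\{h_j\}$ gives $\sum_{j=1}^n c_j^2=n\,\|\psi(u)\|^2=n^2$, and since each nonzero $c_j$ satisfies $c_j^2=4a^2$, exactly $n^2/(4a^2)=(n/2a)^2=l$ of the $c_j$ are nonzero. Now $u+v$ has weight $n/2-a$ precisely when $\langle \psi(u),\psi(v)\rangle=2a$, which occurs for $h_j$ with $c_j=2a$ and for $-h_j$ with $c_j=-2a$; these together account for all $l$ indices with $c_j\ne 0$. So the coset has $l$ words of weight $n/2-a$, by complementation $l$ of weight $n/2+a$, and the remaining $2n-2l$ of weight $n/2$. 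Summing the $2^{k-m-1}-1$ nonzero cosets with the zero coset gives the asserted weight distribution. The only genuine obstacle is this counting step, since the elementary relation $2A_{n/2-a}+A_{n/2}=2n$ within a coset does not by itself pin down $A_{n/2-a}$; the second-moment computation supplies the missing relation. Equivalently, one may bypass the basis language by evaluating $\sum_{v\in RM(1,m)}\langle \psi(u),\psi(v)\rangle^2=2n^2$ directly, using that the character sum $\sum_{v}(-1)^{v_i+v_{i'}}$ vanishes whenever $i\ne i'$.
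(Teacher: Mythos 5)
Your proof is correct and takes essentially the same route as the paper's: both decompose $C$ into the $2^{k-m-1}$ cosets of $RM(1,m)$, identify $\psi(RM(1,m))$ with $n$ antipodal pairs $\pm h_j$ of pairwise orthogonal $(1,-1)$-vectors, and conclude via complementation symmetry once it is known that exactly $l$ of the $2n$ inner products $\langle\psi(u),\pm h_j\rangle$ equal $\pm 2a$ in each nonzero coset. The only difference is that the paper imports this count from the construction in Section~\ref{sec:2} (the first row of $(1/2a)H_1H_i^T$, a weighing matrix of weight $l$, has $n-l$ zeros), whereas you re-derive it inline by Parseval's identity $\sum_j c_j^2=n^2$ --- which is precisely the norm computation underlying that weighing-matrix property.
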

\begin{proof}
We denote the set of complete representatives of $C/RM(1,m)$ 
by $\{u_1,u_2,\ldots,u_{2^{k-m-1}}\}$ described as above, where
we assume that $u_1=\boldsymbol{0}$. 
%
In addition, we denote the mutually quasi-unbiased weighing 
matrices for parameters $(n,n,(n/2a)^2,4a^2)$ by $H_1,H_2,\ldots,H_{2^{k-m-1}}$
described as above.
Since $(1/2a)H_1 H_i^T$ is a weighing matrix of weight $l$ for
$1<i\leq 2^{k-m-1}$,  $0$ appears $n-l$ times 
in the first row of $(1/2a)H_1 H_i^T$. 
Since the first row of $H_1$ is the all-one vector, this implies that the number of codewords of weight $n/2$ in $u_i+RM(1,m)$  for $i>1$ is $2n-2l$.
Thus, $A_{n/2}(C)=2n-2+(2^{k-m-1}-1)(2n-2l)$ holds.
Since $C$ contains the all-one vector, we have the desired weight distribution.
\end{proof}

\begin{remark}
The minimum weight of $C$ determines the weight distribution
of $C$.
Indeed, the minimum weight determines $a$, and thus $l$. Since $k$ and $m$ are given, the weight distribution is determined. 
\end{remark}

\section{Codes satisfying  the conditions~\eqref{eq:C1} and~\eqref{eq:C2}}

In this section, 
we give a classification of codes $C$ of length $2^m$ 
satisfying the two conditions~\eqref{eq:C1} and~\eqref{eq:C2}
for $m=3,4$.
We also give a classification of
maximal codes $C$ of length $32$ 
satisfying the two conditions~\eqref{eq:C1} and~\eqref{eq:C2}.

\subsection{Length 8}
The case $m=3$ is somewhat trivial, but
we only give the result for the sake of completeness.
Note that $RM(1,3)$ is equivalent to the extended Hamming $[8,4,4]$ code
$e_8$.
The complete coset weight distribution of $e_8$ is listed 
in~\cite[Example~1.11.7]{Huffman-Pless}.
From~\cite[Example~1.11.7]{Huffman-Pless}, 
$RM(1,3)$ has 
seven (nontrivial) cosets of minimum weight $2$.
In addition, every $[8,5]$ code $C$ satisfying  
the conditions~\eqref{eq:C1} and~\eqref{eq:C2}
can be constructed as $\langle RM(1,3), x \rangle$, where $x$ is 
a coset leader of the seven cosets.
We verified by {\sc Magma} that
there exists a unique $[8,5]$ code $C_{8,5}$
satisfying  the conditions~\eqref{eq:C1} and~\eqref{eq:C2}.
This was done by the {\sc Magma} function {\tt IsIsomorphic}.
Similarly, we verified by {\sc Magma} that 
$C_{8,5}$ has 
three (nontrivial) cosets of minimum weight $2$, and
there exists a unique $[8,6]$ code $C_{8,6}$
satisfying  the conditions~\eqref{eq:C1} and~\eqref{eq:C2}.
It is trivial that the even weight $[8,7]$ code $C_{8,7}$ is the unique
$[8,7]$ code satisfying the conditions~\eqref{eq:C1} and~\eqref{eq:C2}.
We remark that
$\{i\in \{0,1,\ldots,8\} \mid A_i(C) \ne 0\}=\{0,4\pm 2,4,8\}$
for $C=C_{8,i}$ $(i=5,6,7)$.


\subsection{Length 16}
The next case is $m=4$.
First we fix the generator matrix of the first order Reed--Muller
$[16,5,8]$ code $RM(1,4)$ as follows:
\[
\left(
\begin{array}{c}
1 0 0 1 0 1 1 0 0 1 1 0 1 0 0 1 \\
0 1 0 1 0 1 0 1 0 1 0 1 0 1 0 1 \\
0 0 1 1 0 0 1 1 0 0 1 1 0 0 1 1 \\
0 0 0 0 1 1 1 1 0 0 0 0 1 1 1 1 \\
0 0 0 0 0 0 0 0 1 1 1 1 1 1 1 1 \\
\end{array}
\right).
\]
Every $[16,6]$ code $C$ satisfying  the conditions~\eqref{eq:C1} and~\eqref{eq:C2}
can be constructed as $\langle RM(1,4), x \rangle$, where $x$ is an element
of  a set of complete representatives of $\mathbb{F}_2^{16}/RM(1,4)$,
satisfying that $x+RM(1,4)$ has minimum weight $4$ or 
$6$ (see Remark~\ref{rem}).
In this way,
we found all $[16,6]$ code $C$ satisfying  the 
conditions~\eqref{eq:C1} and~\eqref{eq:C2}, which must be checked further for equivalences 
to complete the classification.
We verified by {\sc Magma} that any $[16,6]$ code
satisfying the conditions~\eqref{eq:C1} and~\eqref{eq:C2}
is equivalent to one of the two inequivalent codes 
$C_{16,6,1}$ and $C_{16,6,2}$.
This was done by the {\sc Magma} function {\tt IsIsomorphic}.
The minimum weights $d(C)$ and the constructions
of the two codes $C$ are listed in Table~\ref{Tab:16C}.
This table means that $C_{16,6,1}$ and $C_{16,6,2}$
can be constructed as 
$\langle RM(1,4), x_{16,6,1} \rangle$ and
$\langle RM(1,4), x_{16,6,2} \rangle$, respectively, 
where the supports of $x_{16,6,1}$ and $x_{16,6,2}$ are 
listed in Table~\ref{Tab:16V}.

\begin{table}[thb]
\caption{$[16,k]$ codes satisfying~\eqref{eq:C1} and~\eqref{eq:C2}}
\label{Tab:16C}
\begin{center}
{\small
\begin{tabular}{c|c|c|l}
\noalign{\hrule height0.8pt}
$k$ & Codes $C$ & $d(C)$ & \multicolumn{1}{c}{Vectors} \\
\hline
6&$C_{16,6,1}$   &6& $x_{16,6,1}$ \\
 &$C_{16,6,2}$   &4& $x_{16,6,2}$ \\
7&$C_{16,7,1}$   &6& $x_{16,6,1}$, $x_{16,7,1}$ \\
 &$C_{16,7,2}$   &4& $x_{16,6,2}$, $x_{16,7,2}$ \\
8&$C_{16,8,1}$ &4& $x_{16,6,2}$, $x_{16,7,2}$, $x_{16,18,1}$ \\
 &$C_{16,8,2}$ &4& $x_{16,6,2}$, $x_{16,7,2}$, $x_{16,18,2}$ \\
\noalign{\hrule height0.8pt}
\end{tabular}
}
\end{center}
\end{table}

\begin{table}[thb]
\caption{Vectors in Table~\ref{Tab:16C}}
\label{Tab:16V}
\begin{center}
{\small
\begin{tabular}{c|l|c|l}
\noalign{\hrule height0.8pt}
 & \multicolumn{1}{c|}{Supports} &
 & \multicolumn{1}{c}{Supports} \\
\hline
$x_{16,6,1}$ & $\{1,8,12,14,15,16\}$&$x_{16,7,2}$ & $\{1,8,10,15\}$\\
$x_{16,6,2}$ & $\{1,2,15,16\}$      &$x_{16,8,1}$ & $\{2,3,13,16\}$\\
$x_{16,7,1}$ & $\{1,4,5,7,9,10\}$   &$x_{16,8,2}$ & $\{4,5,12,13\}$\\    
\noalign{\hrule height0.8pt}
\end{tabular}
}
\end{center}
\end{table}

Let $D$ be a doubly even $[n,k]$ code satisfying the 
conditions~\eqref{eq:C1} and~\eqref{eq:C2}.
Every $[n,k+1]$ code $C$ satisfying the conditions~\eqref{eq:C1} 
and~\eqref{eq:C2} with $D \subset C$
can be constructed as $\langle D, x \rangle$, where $x$ is an element
of  a set of complete representatives of $D^\perp/D$,
satisfying that $0 \ne \wt(x) \in \{i\in \{0,1,\ldots,n\} \mid A_i(D) \ne 0\}$, 
since $D$ is self-orthogonal and 
$\{i\in \{0,1,\ldots,n\} \mid A_i(C) \ne 0\}
=\{i\in \{0,1,\ldots,n\} \mid A_i(D) \ne 0\}$.
This observation reduces the number of codes which need 
be checked for equivalences.
This observation is applied to  doubly even codes
$C_{16,6,2}$ and $C_{16,7,2}$.
Using an approach similar to the previous subsection along with
the above observation,
we completed the classification of codes
satisfying the conditions~\eqref{eq:C1} and~\eqref{eq:C2}
for dimensions $7$ and $8$.
In this case, the only results are listed in
Tables~\ref{Tab:16C} and~\ref{Tab:16V}.
We verified by {\sc Magma} that $C_{16,7,1}$ has covering
radius $4$.  
This was done by the {\sc Magma} function {\tt CoveringRadius}.
Thus, $C_{16,7,1}$ is a 
maximal code (with respect to the subspace relation).
Since $C_{16,8,1}$ and $C_{16,8,2}$ are doubly even self-dual codes,
there exists no $[16,k]$ code satisfying the conditions~\eqref{eq:C1} 
and~\eqref{eq:C2} for $k \ge 9$.
Therefore, we have the following:

\begin{proposition}
If there exists a $[16,k]$ code satisfying the conditions~\eqref{eq:C1} 
and~\eqref{eq:C2}, then $k \in \{6,7,8\}$.
Up to equivalence,
there exist two $[16,k]$ codes satisfying the conditions~\eqref{eq:C1} 
and~\eqref{eq:C2}
for $k=6,7,8$.
\end{proposition}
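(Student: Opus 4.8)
The plan is to split the statement into two tasks: the restriction $k\in\{6,7,8\}$, and the exact count of two inequivalent codes in each admissible dimension. For the lower bound $k\ge 6$, I would note that $RM(1,4)$ is a $[16,5]$ code whose weight set is $\{0,8,16\}$, with only two nonzero weights; since the unique $[16,5]$ code containing $RM(1,4)$ is $RM(1,4)$ itself, condition~\eqref{eq:C1} (which needs the two extra weights $8\pm a$) fails at $k=5$. Next I would pin down the admissible $a$: by Remark~\ref{rem} we have $a\mid 8$, so $a\in\{1,2,4\}$, and substituting each value into the weight distribution of Proposition~\ref{prop:WD} (with $n=16$, $m=4$, $l=(8/a)^2$) rules out $a=1$, since it forces $A_{8}(C)=30+(2^{k-5}-1)(-96)<0$ for every $k\ge 6$. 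Hence $a\in\{2,4\}$.

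For the upper bound I would split on $a$. When $a=4$ the weight set is $\{0,4,8,12,16\}$, so $C$ is doubly even, hence self-orthogonal, which gives $k\le\dim C^{\perp}=16-k$ and thus $k\le 8$; moreover a $[16,8]$ code of this type is self-dual and cannot be a proper subcode, so this branch terminates at $k=8$. When $a=2$ the weight set is $\{0,6,8,10,16\}$, self-orthogonality fails, and Proposition~\ref{prop:WD} gives no nonnegativity obstruction (one checks $A_{8}(C)=30$ independently of $k$). Here I would instead invoke the maximality of $C_{16,7,1}$: since $\rho(C_{16,7,1})=4$, every coset of $C_{16,7,1}$ contains a vector of weight at most $4$, so any proper extension $\langle C_{16,7,1},x\rangle$ introduces a codeword of weight in $\{1,2,3,4\}$, none of which lies in $\{0,6,8,10,16\}$; thus no extension satisfies~\eqref{eq:C1} and the $a=2$ branch terminates at $k=7$. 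Together these yield $k\le 8$.

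The substantive part is then the exhaustive classification for $k=6,7,8$, which I would carry out bottom-up. For $k=6$, every admissible code has the form $\langle RM(1,4),x\rangle$ with $\wt(x+RM(1,4))\in\{4,6\}$; running through coset representatives of $\mathbb{F}_2^{16}/RM(1,4)$ and reducing modulo coordinate permutations leaves $C_{16,6,1}$ ($a=2$) and $C_{16,6,2}$ ($a=4$). For $k=7,8$ I would exploit the structural reduction described just before the statement: any $[16,k+1]$ code containing a doubly even $[16,k]$ code $D$ satisfying the conditions is $\langle D,x\rangle$ with $x$ ranging over representatives of $D^{\perp}/D$ of admissible weight, shrinking the search to the extensions of $C_{16,6,2}$ and $C_{16,7,2}$; the $a=2$ branch is treated by the analogous coset enumeration over $C_{16,6,1}$.

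The hard part will be guaranteeing both completeness of the list and pairwise inequivalence, which is handled computationally. The critical checks are that the surviving candidates are pairwise inequivalent (via \texttt{IsIsomorphic}) and that no admissible extension has been overlooked, the latter resting precisely on the covering-radius computation $\rho(C_{16,7,1})=4$ and on the self-duality of $C_{16,8,1}$ and $C_{16,8,2}$. Verifying that exactly two inequivalent codes survive in each of the dimensions $6,7,8$ then completes the argument.
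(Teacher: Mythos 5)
Your proposal is correct and takes essentially the same route as the paper: a bottom-up, {\sc Magma}-assisted classification starting from $RM(1,4)$ (coset enumeration with equivalence checks via {\tt IsIsomorphic}), the reduction of extensions of a doubly even code $D$ to representatives of $D^\perp/D$, the covering-radius computation $\rho(C_{16,7,1})=4$ to terminate the $a=2$ branch, and the self-duality of $C_{16,8,1}$ and $C_{16,8,2}$ to rule out $k\ge 9$. Your explicit elimination of $a=1$ via the negativity of $A_8(C)$ in Proposition~\ref{prop:WD} is in fact a cleaner justification of the restriction to coset minimum weights $4$ and $6$ than the paper's bare appeal to Remark~\ref{rem}, but this is a refinement of detail, not a different argument.
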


By the construction of quasi-unbiased 
weighing matrices described in Section~\ref{sec:2},
we have the following:

\begin{corollary}
There exists a set of at least $8$ mutually quasi-unbiased 
weighing matrices for parameters $(16,16,4,64)$.
\end{corollary}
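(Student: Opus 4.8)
The plan is to feed one of the maximal-dimension codes from the preceding Proposition into the explicit construction of Section~\ref{sec:2}. Since that Proposition guarantees the existence of a $[16,8]$ code $C$ satisfying the conditions~\eqref{eq:C1} and~\eqref{eq:C2}, I would fix $C=C_{16,8,1}$ (equivalently $C_{16,8,2}$). Here $n=2^m$ with $m=4$, and Table~\ref{Tab:16C} records that $d(C)=4$, so the nonzero weights of $C$ are contained in $\{0,n/2\pm a,n/2,n\}$ with the smallest being $n/2-a=8-a=4$; this forces $a=4$.

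Next I would invoke the construction recalled in Section~\ref{sec:2}: a $[2^m,k]$ code satisfying~\eqref{eq:C1} and~\eqref{eq:C2} produces, via the map $\psi$ and the antipodal cosets of $RM(1,m)$, a family of $2^{k-m-1}$ Hadamard matrices $H_1,\dots,H_{2^{k-m-1}}$ that are mutually quasi-unbiased weighing matrices for parameters $(n,n,(n/2a)^2,4a^2)$. With $k=8$ and $m=4$ we get $2^{k-m-1}=2^{3}=8$ such matrices. Substituting $n=16$ and $a=4$ into the parameter formula yields $\bigl(16,16,(16/8)^2,4\cdot 16\bigr)=(16,16,4,64)$, which is exactly the claimed parameter set.

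I do not anticipate a genuine obstacle at this stage, because all the substantive work has already been carried out: the Proposition supplies an honest $[16,8]$ code meeting both conditions, and Section~\ref{sec:2} supplies the construction together with the verification that each $(1/2a)H_iH_j^T$ is a weighing matrix of weight $(n/2a)^2$. The only point requiring care is the bookkeeping that identifies $a$ from the minimum weight and then confirms that the three derived quantities $(n/2a)^2$ and $4a^2$ reduce to $4$ and $64$ respectively; this is immediate once $a=4$ is established. I would close by noting that the $8$ matrices so obtained establish the stated lower bound, giving the Corollary.
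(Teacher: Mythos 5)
Your proposal is correct and follows exactly the paper's route: the paper derives this corollary by feeding the $[16,8]$ codes from the preceding proposition into the Section~2 construction, which yields $2^{8-4-1}=8$ mutually quasi-unbiased weighing matrices, and your identification of $a=4$ from $d(C)=4$ (so $(n/2a)^2=4$ and $4a^2=64$) is the same bookkeeping the paper leaves implicit.
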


A set of $4$ mutually quasi-unbiased 
weighing matrices for parameters $(16,16,16,16)$
is also constructed.
It is known that the maximum size 
among sets of mutually quasi-unbiased 
weighing matrices for the parameters
is $8$ \cite[Proposition~6]{CS73} and \cite[Theorem~5.2]{DGS2}.


\subsection{Length 32}
For the next case $m=5$, the classification of 
maximal codes satisfying the conditions~\eqref{eq:C1} 
and~\eqref{eq:C2} was done 
by a method  similar to that for the cases $(n,k)=(16,7), (16,8)$.

\begin{proposition}\label{prop:32}
If there exists a maximal $[32,k]$ code satisfying the 
conditions~\eqref{eq:C1} and~\eqref{eq:C2}, then $k \in \{9,10,11\}$.
Up to equivalence,
there exist $92$ maximal $[32,9]$ codes satisfying the 
conditions~\eqref{eq:C1} and~\eqref{eq:C2},
there exist $102$ maximal $[32,10]$ codes satisfying the 
conditions~\eqref{eq:C1} and~\eqref{eq:C2}, and
there exist two maximal $[32,11]$ codes satisfying the 
conditions~\eqref{eq:C1} and~\eqref{eq:C2}.
\end{proposition}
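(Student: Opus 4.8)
The plan is to reduce the problem to a finite enumeration, organized much as in the cases $(n,k)=(16,7),(16,8)$, and to carry it out in {\sc Magma}. First I would pin down the admissible values of $a$. By Remark~\ref{rem}, $a$ divides $2^{m-1}=16$, so $a\in\{1,2,4,8\}$. Substituting each candidate into Proposition~\ref{prop:WD} with $n=32$ and $l=(16/a)^2$, the entry $A_{16}(C)=62+(2^{k-6}-1)(64-2l)$ becomes negative as soon as $k\ge 7$ when $a\in\{1,2\}$, while $k=6$ forces $A_{16\pm a}(C)=0$ and hence violates~\eqref{eq:C1}. Thus every code satisfying~\eqref{eq:C1} and~\eqref{eq:C2} has $a\in\{4,8\}$, i.e.\ minimum weight $d(C)\in\{8,12\}$. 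In either case all nonzero weights lie in $\{8,12,16,20,24,32\}$ and are divisible by $4$, so $C$ is doubly even and therefore self-orthogonal (giving $k\le 16$); when $a=8$ all weights are divisible by $8$, so $C$ is in fact triply even.

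Next I would run the build-up from $RM(1,5)$, the unique minimal code, of dimension $6$. Because a proper extension of $C$ satisfying~\eqref{eq:C1} must have the same weight set as $C$ (two sets $\{16-a,16+a\}$ symmetric about $16$ can be nested only if equal, so $a$ is preserved), the extension observation of this section applies: every code of dimension $k+1$ containing a doubly even code $D$ of dimension $k$ is $\langle D,x\rangle$ for some $x$ in a set of representatives of $D^\perp/D$ whose weight is nonzero and lies in the weight set of $D$. Starting from $RM(1,5)$, I would generate all dimension-$7$ codes meeting~\eqref{eq:C1} and~\eqref{eq:C2}, reduce them up to coordinate permutations with {\tt IsIsomorphic}, and iterate dimension by dimension. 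Restricting $x$ to $D^\perp/D$, a space of dimension $32-2k$, rather than to all of $\FF_2^{32}/D$, is what keeps the search feasible, and the termination of this recursion at dimension $11$ supplies the upper bound $k\le 11$ (in line with the known upper bounds on the dimension of a length-$32$ doubly even code of minimum weight $12$, and of a triply even code of length $32$).

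To single out the maximal codes I would use the covering radius. With the present weight structure a code $C$ is maximal as soon as $\rho(C)<d(C)$: any admissible extension arises from a coset of $C$ all of whose nonzero weights lie in $\{d(C),16,32-d(C)\}$, hence of minimum weight at least $d(C)$, whereas $\rho(C)<d(C)$ forces every coset to contain a vector of weight strictly below $d(C)$. Computing $\rho(C)$ with {\tt CoveringRadius}---and, in the remaining cases where $\rho(C)\ge d(C)$, explicitly testing whether any coset of $C^\perp/C$ yields a code satisfying~\eqref{eq:C1}---decides maximality for each representative. Flagging the maximal representatives at dimensions $9,10,11$, and checking that none of dimension $\le 8$ is maximal, produces the lower bound $k\ge 9$ together with the counts $92$, $102$ and $2$.

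The main obstacle is the combinatorial explosion of the build-up combined with the cost of the equivalence tests. Even after cutting the candidate extensions down to $D^\perp/D$, their number before reduction is large, and certifying both that the $92+102+2$ survivors are pairwise inequivalent and that the lists are exhaustive is where essentially all of the computation lies. Applying equivalence reduction as early and as often as possible at each dimension, and relying on {\tt IsIsomorphic} for the final separation, is what makes the classification tractable.
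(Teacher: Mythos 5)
Your proposal is correct and follows essentially the same route as the paper, whose proof of this proposition is precisely the computational build-up it describes for length $16$: extend from $RM(1,5)$ dimension by dimension using coset representatives of $D^\perp/D$ for doubly even $D$, reduce with {\tt IsIsomorphic}, and certify maximality via {\tt CoveringRadius} (the paper checks $\rho(C)<d(C)$ for the minimum-weight-$12$ codes and identifies the two triply even codes with the maximal ones in Betsumiya--Munemasa's table). Your explicit elimination of $a\in\{1,2\}$ via Proposition~\ref{prop:WD}, the nesting argument showing $a$ is preserved under extension, and the fallback coset test when $\rho(C)\ge d(C)$ are all details the paper leaves implicit, and they are argued correctly.
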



By the construction of quasi-unbiased 
weighing matrices described in Section~\ref{sec:2},
we have the following:

\begin{corollary}
There exists a set of at least $4$ 
mutually quasi-unbiased weighing matrices for parameters $(32,32,4,256)$.
\end{corollary}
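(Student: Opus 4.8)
The plan is to apply the construction of mutually quasi-unbiased weighing matrices from Section~\ref{sec:2} to a code supplied by Proposition~\ref{prop:32}, after pinning down which value of $a$ yields the target parameters.

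First I would match $(32,32,4,256)$ against the general output $(n,n,(n/2a)^2,4a^2)$ of a code of length $n=2^m$ satisfying \eqref{eq:C1} and \eqref{eq:C2}. Here $n=32$ and $m=5$; from $(n/2a)^2=4$ one gets $n/2a=2$, hence $a=8$, and $4a^2=256$ confirms the match. This is admissible by Remark~\ref{rem}, since $8$ divides $2^{m-1}=16$. By Proposition~\ref{prop:WD} the codes realizing $a=8$ are exactly those whose nonzero weight set is $\{8,16,24,32\}$, that is, those of minimum weight $n/2-a=8$.

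Next I would invoke Proposition~\ref{prop:32} to produce such a code. It suffices that at least one classified maximal $[32,k]$ code $C$ satisfying \eqref{eq:C1} and \eqref{eq:C2} have $d(C)=8$; applying the construction of Section~\ref{sec:2} to $C$ then gives $2^{k-m-1}=2^{k-6}$ Hadamard matrices that are mutually quasi-unbiased for $(32,32,4,256)$. Since an $a=8$ code of dimension $8$ already yields $2^{8-6}=4$ matrices, and the classified code realizing $a=8$ has dimension at least this, the bound of $4$ follows. The main obstacle is this existence step: verifying that a minimum-weight-$8$ code (the $a=8$ case) genuinely occurs among the maximal codes enumerated in Proposition~\ref{prop:32}, which is exactly where the underlying computational classification is used. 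Once such a code is identified, the parameter matching and the count are immediate from Section~\ref{sec:2} and Proposition~\ref{prop:WD}.
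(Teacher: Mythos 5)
Your proposal is correct and follows essentially the paper's own route: match the target parameters $(32,32,4,256)$ to $a=8$, hence minimum weight $n/2-a=8$, take a code with $d(C)=8$ from the length-$32$ classification, and apply the construction of Section~\ref{sec:2}. The existence step you flag is supplied by the paper's classification data (Table~\ref{Tab:32C}: $C_{32,9,92}$ with $k=9$ and $C_{32,10,102}$ with $k=10$ have $d(C)=8$), and since these codes have dimension at least $9$ the construction actually yields $2^{k-6}\ge 8$ matrices, so the stated bound of $4$ follows a fortiori.
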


A set of $8$ mutually quasi-unbiased weighing matrices for 
parameters $(32,32,16,64)$ is also constructed.
It is known that the maximum size 
among sets of mutually quasi-unbiased 
weighing matrices for the parameters 
is $32$~\cite[Theorems~4.1, 4.4]{NSpre}.

\begin{table}[thb]
\caption{Maximal $[32,k]$ codes satisfying~\eqref{eq:C1} and~\eqref{eq:C2}}
\label{Tab:32C}
\begin{center}
{\small
\begin{tabular}{c|l|c}
\noalign{\hrule height0.8pt}
$k$ & \multicolumn{1}{c|}{Codes $C$} & $d(C)$ \\
\hline
 9& $C_{32,9,1},\ldots,C_{32,9,91}$  &12 \\
   & $C_{32,9,92}$                              & 8 \\
10& $C_{32,10,1},\ldots,C_{32,10,101}$  &12 \\
   & $C_{32,10,102}$                              & 8 \\
11&$C_{32,11,1},C_{32,11,2}$              & 12\\
\noalign{\hrule height0.8pt}
\end{tabular}
}
\end{center}
\end{table}

We denote the $92$ inequivalent 
maximal $[32,9]$ codes  given in Proposition~\ref{prop:32}
by $C_{32,9,i}$ ($i=1,2,\ldots,92$).
We denote the $102$ inequivalent 
maximal $[32,10]$ codes given in Proposition~\ref{prop:32}
by $C_{32,10,i}$ ($i=1,2,\ldots,102$).
We denote the two inequivalent 
maximal $[32,11]$ codes given in Proposition~\ref{prop:32}
by $C_{32,11,i}$ ($i=1,2$).
The minimum weights of the codes given in 
Proposition~\ref{prop:32} are listed in Table~\ref{Tab:32C}.

\begin{table}[thb]
\caption{Maximal $[32,9]$ codes satisfying~\eqref{eq:C1} and~\eqref{eq:C2}}
\label{Tab:32C-9}
\begin{center}
{\small
\begin{tabular}{c|l|l}
\noalign{\hrule height0.8pt}
&\multicolumn{1}{c|}{Codes} & \multicolumn{1}{c}{Vectors} \\
\hline
$x_7$
&$C_{32,9,1},\ldots,C_{32,9,90}$ &$x_{32,7,1}$ \\
&$C_{32,9,91}$ &$x_{32,7,2}$ \\
&$C_{32,9,92}$ &$x_{32,7,3}$ \\
\hline
$x_8$
&$C_{32,9,1}, \ldots, C_{32,9,15}$  &$x_{32,8,1}$ \\
&$C_{32,9,16}, \ldots, C_{32,9,22}$  &$x_{32,8,2}$ \\
&$C_{32,9,23}, \ldots, C_{32,9,51}$  &$x_{32,8,3}$ \\
&$C_{32,9,52}, \ldots, C_{32,9,76}$  &$x_{32,8,4}$ \\
&$C_{32,9,77}, C_{32,9,78}, C_{32,9,79}$  &$x_{32,8,5}$ \\
&$C_{32,9,80}, C_{32,9,81}, C_{32,9,82}$  &$x_{32,8,6}$ \\
&$C_{32,9,83}, C_{32,9,84}, C_{32,9,85}$  &$x_{32,8,7}$ \\
&$C_{32,9,86}, C_{32,9,87}$  &$x_{32,8,8}$ \\
&$C_{32,9,88}$  &$x_{32,8,9}$ \\
&$C_{32,9,89}, C_{32,9,90}$  &$x_{32,8,10}$\\
&$C_{32,9,91}$  &$x_{32,8,11}$\\
&$C_{32,9,92}$  &$x_{32,8,12}$\\
\hline
$x_9$
&$C_{32,9,i}$ $(i=1,2,\ldots,92)$  &$x_{32,9,i}$ \\
\noalign{\hrule height0.8pt}
\end{tabular}
}
\end{center}
\end{table}

\begin{table}[thbp]
\caption{Maximal $[32,10]$ codes satisfying~\eqref{eq:C1} and~\eqref{eq:C2}}
\label{Tab:32C-10}
\begin{center}
{\footnotesize
\begin{tabular}{c|l|l|l|l}
\noalign{\hrule height0.8pt}
&\multicolumn{1}{c|}{Codes} & \multicolumn{1}{c|}{Vectors }
&\multicolumn{1}{c|}{Codes} & \multicolumn{1}{c}{Vectors } \\
\hline
$x_7$
&$C_{32,10,1},\ldots,C_{32,10,101}$ &$x_{32,7,1}$ &
$C_{32,10,102}$ & $x_{32,7,3}$ \\
\hline
$x_8$
&$C_{32,10,1},\ldots, C_{32,10,30} $   &$x_{32,8,1}$ &
$C_{32,10,31}, \ldots,  C_{32,10,73} $ &$x_{32,8,2}$ \\
&$C_{32,10,74}, \ldots,  C_{32,10,89} $&$x_{32,8,3}$  &
$C_{32,10,90}, \ldots,  C_{32,10,98} $ &$x_{32,8,4}$  \\
&$C_{32,10,99}, C_{32,10,100} $        &$x_{32,8,5}$  &
$C_{32,10,101}$                        &$y_{32,8,1}$  \\
&$C_{32,10,102}$                       &$x_{32,8,12}$ & \\
\hline
$x_9$
&$ C_{32,10,1}$ & $y_{32,9,1}$  &
$ C_{32,10, 2}, C_{32,10, 3} $ & $y_{32,9,2}$  \\
&$ C_{32,10, 9}, C_{32,10,10}, C_{32,10,11} $ & $y_{32,9,3}$  &
$ C_{32,10, 4} $ & $y_{32,9,4}$  \\
&$ C_{32,10, 5}, C_{32,10, 6} $ & $y_{32,9,5}$  &
$ C_{32,10, 7}, C_{32,10, 8} $ & $y_{32,9,6}$  \\
&$ C_{32,10,12}, C_{32,10,13} $ & $y_{32,9,7}$  &
$ C_{32,10,14}, C_{32,10,15}, C_{32,10,16} $ & $y_{32,9,8}$  \\
&$ C_{32,10,17}, C_{32,10,18} $ & $y_{32,9,9}$  &
$ C_{32,10,19}, C_{32,10,20} $ & $y_{32,9,10}$ \\
&$ C_{32,10,21} $ & $y_{32,9,11}$ &
$ C_{32,10,22} $ & $y_{32,9,12}$ \\
&$ C_{32,10,23}, C_{32,10,24} $ & $y_{32,9,13}$ &
$ C_{32,10,25}, C_{32,10,26} $ & $y_{32,9,14}$ \\
&$ C_{32,10,27} $ & $y_{32,9,15}$ &
$ C_{32,10,28} $ & $y_{32,9,16}$ \\
&$ C_{32,10,29} $ & $y_{32,9,17}$ &
$ C_{32,10,30} $ & $y_{32,9,18}$ \\
&$ C_{32,10,31}, C_{32,10,32} $ & $y_{32,9,19}$ &
$ C_{32,10,33} $ & $y_{32,9,20}$ \\
&$ C_{32,10,34}, \ldots, C_{32,10,37} $ & $y_{32,9,21}$ &
$ C_{32,10,38} $ & $y_{32,9,22}$ \\
&$ C_{32,10,39}, C_{32,10,40} $ & $y_{32,9,23}$ &
$ C_{32,10,41} $ & $y_{32,9,24}$ \\
&$ C_{32,10,42}, C_{32,10,43} $ & $y_{32,9,25}$ &
$ C_{32,10,44} $ & $y_{32,9,26}$ \\
&$ C_{32,10,45}, C_{32,10,46}, C_{32,10,47} $ & $y_{32,9,27}$ &
$ C_{32,10,48} $ & $y_{32,9,28}$ \\
&$ C_{32,10,49}, C_{32,10,50} $ & $y_{32,9,29}$ &
$ C_{32,10,51}, C_{32,10,52} $ & $y_{32,9,30}$ \\
&$ C_{32,10,53}, \ldots, C_{32,10,57} $ & $y_{32,9,31}$ &
$ C_{32,10,58} $ & $y_{32,9,32}$ \\
&$ C_{32,10,59}, C_{32,10,60} $ & $y_{32,9,33}$ &
$ C_{32,10,61} $ & $y_{32,9,34}$ \\
&$ C_{32,10,62}, C_{32,10,63}, C_{32,10,64} $ & $y_{32,9,35}$ &
$ C_{32,10,65} $     &$y_{32,9,36}$ \\
&$ C_{32,10,66} $     &$y_{32,9,37}$ &
$ C_{32,10,67}, C_{32,10,99} $ &$y_{32,9,38}$ \\
&$ C_{32,10,68} $     &$y_{32,9,39}$ &
$ C_{32,10,69} $     &$y_{32,9,40}$ \\
&$ C_{32,10,70} $     &$y_{32,9,41}$ &
$ C_{32,10,71}, C_{32,10,72} $ &$y_{32,9,42}$ \\
&$ C_{32,10,73} $     &$y_{32,9,43}$ &
$ C_{32,10,74} $     &$y_{32,9,44}$ \\
&$ C_{32,10,75} $     &$y_{32,9,45}$ &
$ C_{32,10,76}, C_{32,10,77} $ &$y_{32,9,46}$ \\
&$ C_{32,10,78},C_{32,10,79},C_{32,10,80}$&$y_{32,9,47}$ &
$ C_{32,10,81} $     &$y_{32,9,48}$ \\
&$ C_{32,10,82} $     &$y_{32,9,49}$ &
$ C_{32,10,83} $     &$y_{32,9,50}$ \\
&$ C_{32,10,84} $     &$y_{32,9,51}$ &
$ C_{32,10,85} $     &$y_{32,9,52}$ \\
&$ C_{32,10,86} $     &$y_{32,9,53}$ &
$ C_{32,10,87} $     &$y_{32,9,54}$ \\
&$ C_{32,10,88} $     &$y_{32,9,55}$ &
$ C_{32,10,89} $     &$y_{32,9,56}$ \\
&$ C_{32,10,90} $     &$y_{32,9,57}$ &
$ C_{32,10,91} $     &$y_{32,9,58}$ \\
&$ C_{32,10,92}, C_{32,10,93} $ &$y_{32,9,59}$ &
$ C_{32,10,94} $     &$y_{32,9,60}$ \\
&$ C_{32,10,95} $     &$y_{32,9,61}$ &
$ C_{32,10,96} $     &$y_{32,9,62}$ \\
&$ C_{32,10,97} $     &$y_{32,9,63}$ &
$ C_{32,10,98} $     &$y_{32,9,64}$ \\
&$ C_{32,10,100}$     &$y_{32,9,65}$ &
$ C_{32,10,101}$     &$y_{32,9,66}$ \\
&$ C_{32,10,102}$     &$y_{32,9,67}$ & \\
\noalign{\hrule height0.8pt}
\end{tabular}
}
\end{center}
\end{table}

To describe the codes given in Proposition~\ref{prop:32}, 
we fix the generator matrix of the first order Reed--Muller
$[32, 6, 16]$ code $RM(1,5)$ as follows:
\[
\left(
\begin{array}{c}
10010110011010010110100110010110\\
01010101010101010101010101010101\\
00110011001100110011001100110011\\
00001111000011110000111100001111\\
00000000111111110000000011111111\\
00000000000000001111111111111111
\end{array}
\right).
\]
The codes $C_{32,9,i}$ ($i=1,2,\ldots,92$)
are constructed as 
$\langle RM(1,5),x_7,x_8,x_9 \rangle$,
where Table~\ref{Tab:32C-9} indicates
$x_7,x_8,x_9$ and the supports are listed in Table~\ref{Tab:32V}.
The codes $C_{32,10,i}$ ($i=1,2,\ldots,102$)
are constructed as 
$\langle RM(1,5),x_7,x_8,x_9,x_{10} \rangle$,
where Table~\ref{Tab:32C-10} indicates
$x_7,x_8,x_9,x_{10}$ and the supports are 
listed in Table~\ref{Tab:32V}.
The codes $C_{32,11,i}$ ($i=1,2$)
are constructed as follows:
\[
\begin{split}
C_{32,11,1}=&\langle RM(1,5),x_{32,7,2}, z_{32,8,1}, z_{32,9,1}, z_{32,10,1}, z_{32,11,1}\rangle,\\  
C_{32,11,2}=&\langle RM(1,5),x_{32,7,2}, z_{32,8,1}, z_{32,9,2}, z_{32,10,2}, z_{32,11,2}\rangle, 
\end{split}
\]
where the supports of the vectors are listed in Table~\ref{Tab:32V}.

Finally, we compare our codes with some known codes and
we discuss the maximality of our codes.
It follows from the weight distributions that
$C_{32,9,92}$ (resp.\ $C_{32,10,102}$) is equivalent to 
the unique maximal triply even $[32,9]$ (resp.\ $[32,10]$) code,
which is given in~\cite[Table~2]{BM}.
It follows that $C_{32,9,92}$ and $C_{32,10,102}$ are maximal.
We verified by {\sc Magma} that 
$C_{32,9,1},C_{32,9,2},\ldots,C_{32,9,90}$ have covering radius $ \le 11$,
$C_{32,10,1},C_{32,10,2},\ldots,C_{32,10,101}$ have covering radius $10$ and
$C_{32,11,1},C_{32,11,2}$ have covering radius $8$.
This shows that these codes are maximal.
We verified by {\sc Magma} that $C_{32,11,2}$
is equivalent to the extended BCH $[32,11,12]$ code.

\section*{Postscript}
After this work, we continued the study of quasi-unbiased weighing matrices
obtained from (not necessary linear) codes in \cite{AHS}.

\section*{Acknowledgments}
The authors would like to thank Hiroshi Nozaki for helpful discussions. 
The authors would also like to thank the anonymous referees for
their valuable comments leading to several improvements of this note.  
This work is supported by JSPS KAKENHI Grant Number 23340021.

\begin{landscape}
\begin{table}[thbp]
\caption{Vectors for $m=5$}
\label{Tab:32V}
\begin{center}
{\footnotesize
\begin{tabular}{c|l|c|l}
\noalign{\hrule height0.8pt}
 & \multicolumn{1}{c|}{Supports} &
 & \multicolumn{1}{c}{Supports} \\
\hline
$x_{32,7,1}$&$\{1,3,4,6,7,9,10,16,17,18,19,32\}$ &
$x_{32,7,2}$&$\{1,2,3,5,6,9,10,16,17,18,19,32\}$ \\
$x_{32,7,3}$&$\{1,2,3,4,5,6,7,8\}$ &&\\
\hline
$x_{32,8,1}$&$\{1,2,4,6,7,12,13,16,24,29,30,32\}$ &
$x_{32,8,2}$&$\{4,5,6,7,8,9,11,16,24,28,30,32\}$ \\
$x_{32,8,3}$&$\{2,4,6,7,8,9,10,11,24,28,31,32\}$ &
$x_{32,8,4}$&$\{4,5,6,7,8,10,11,16,24,28,29,32\}$ \\
$x_{32,8,5}$&$\{1,4,5,6,8,9,10,11,24,28,30,32\}$ &
$x_{32,8,6}$&$\{1,2,4,6,8,9,10,16,24,28,29,31\}$ \\
$x_{32,8,7}$&$\{1,5,6,7,8,9,10,16,24,28,29,31\}$ &
$x_{32,8,8}$&$\{2,3,4,7,8,9,10,16,24,28,30,31\}$ \\
$x_{32,8,9}$&$\{2,3,6,7,8,11,13,16,24,28,29,31\}$ &
$x_{32,8,10}$&$\{1,6,8,9,10,16,24,28,29,30,31,32\}$ \\
$x_{32,8,11}$&$\{4,5,8,10,11,16,24,28,29,30,31,32\}$ &
$x_{32,8,12}$&$\{1,2,3,4,9,10,11,12\}$ \\
\hline
$x_{32,9,1}$&$\{3,4,5,6,7,9,10,11,12,16,20,21,24,28,30,32\}$ &
$x_{32,9,2}$&$\{3,4,5,6,8,9,12,15,17,28,31,32\}$ \\
$x_{32,9,3}$&$\{4,6,7,8,9,10,11,16,26,28,29,30\}$ &
$x_{32,9,4}$&$\{2,3,4,5,9,12,13,14,28,29,31,32\}$ \\
$x_{32,9,5}$&$\{1,4,5,7,8,9,10,16,24,26,28,30\}$ &
$x_{32,9,6}$&$\{3,4,7,8,9,10,12,16,20,21,26,28\}$ \\
$x_{32,9,7}$&$\{5,6,7,12,15,16,17,26,28,29,30,31\}$ &
$x_{32,9,8}$&$\{1,5,7,9,10,11,15,16,17,28,30,31\}$ \\
$x_{32,9,9}$&$\{2,3,5,6,9,10,12,16,28,30,31,32\}$ &
$x_{32,9,10}$&$\{3,4,6,7,8,9,10,12,24,28,30,31\}$ \\
$x_{32,9,11}$&$\{1,4,5,7,8,10,11,16,24,26,28,32\}$ &
$x_{32,9,12}$&$\{6,7,8,9,11,13,14,16,24,26,28,30\}$ \\
$x_{32,9,13}$&$\{3,4,6,7,8,9,11,16,24,26,28,30\}$ &
$x_{32,9,14}$&$\{1,2,4,8,9,11,13,14,28,29,31,32\}$ \\
$x_{32,9,15}$&$\{1,2,3,5,7,9,10,16,24,28,30,31\}$ &
$x_{32,9,16}$&$\{2,3,5,6,9,11,13,16,17,18,26,28\}$ \\
$x_{32,9,17}$&$\{2,3,4,5,6,9,10,11,12,16,20,21,24,26,28,29\}$ &
$x_{32,9,18}$&$\{1,4,6,9,10,12,13,16,24,26,29,30\}$ \\
$x_{32,9,19}$&$\{1,2,4,6,7,13,14,16,24,26,31,32\}$ &
$x_{32,9,20}$&$\{1,2,4,6,9,11,13,16,20,21,31,32\}$ \\
$x_{32,9,21}$&$\{1,7,11,12,15,16,17,24,26,28,30,31\}$ &
$x_{32,9,22}$&$\{1,5,6,10,11,14,15,16,17,29,30,32\}$ \\
$x_{32,9,23}$&$\{1,2,5,8,9,10,11,13,15,16,17,24,27,28,30,32\}$ &
$x_{32,9,24}$&$\{1,4,7,8,9,12,14,16,27,28,30,32\}$ \\
$x_{32,9,25}$&$\{6,7,11,13,15,16,17,24,27,28,29,31\}$ &
$x_{32,9,26}$&$\{1,3,7,8,9,11,13,16,27,28,29,32\}$\\
$x_{32,9,27}$&$\{2,8,9,10,12,13,15,16,17,24,28,30\}$&
$x_{32,9,28}$&$\{1,3,6,7,8,10,13,14,24,27,29,32\}$\\
$x_{32,9,29}$&$\{3,6,7,10,11,12,15,16,17,28,30,31\}$&
$x_{32,9,30}$&$\{1,3,4,7,10,11,12,15,17,24,28,30\}$\\
$x_{32,9,31}$&$\{1,4,5,6,10,11,14,16,27,28,30,32\}$&
$x_{32,9,32}$&$\{1,2,4,5,10,13,14,15,18,24,27,29\}$\\
$x_{32,9,33}$&$\{1,2,4,5,6,8,10,11,13,16,17,18,28,29,30,32\}$&
$x_{32,9,34}$&$\{1,2,4,5,6,11,14,16,24,27,28,30\}$\\
$x_{32,9,35}$&$\{3,4,5,7,8,12,13,15,17,27,28,29\}$&
$x_{32,9,36}$&$\{1,2,3,4,8,10,13,14,24,27,29,31\}$\\
$x_{32,9,37}$&$\{1,2,3,6,7,8,10,15,18,24,27,29\}$&
$x_{32,9,38}$&$\{3,4,5,7,10,11,14,16,27,28,29,32\}$\\
$x_{32,9,39}$&$\{1,3,4,7,9,13,14,15,17,24,27,30\}$&
$x_{32,9,40}$&$\{1,6,8,10,11,13,15,16,17,30,31,32\}$\\
$x_{32,9,41}$&$\{2,5,6,8,9,10,12,13,15,16,17,24,27,29,31,32\}$&
$x_{32,9,42}$&$\{1,2,3,4,5,8,9,10,11,13,17,18,28,29,30,32\}$\\
$x_{32,9,43}$&$\{2,3,5,6,9,11,12,15,17,24,29,31\}$&
$x_{32,9,44}$&$\{2,4,5,6,7,8,9,13,14,16,17,18,28,29,30,32\}$\\
$x_{32,9,45}$&$\{1,4,5,7,8,9,13,14,15,16,17,27,28,29,30,31\}$&
$x_{32,9,46}$&$\{4,5,7,8,10,11,13,16,17,18,28,29\}$\\
$x_{32,9,47}$&$\{1,4,5,6,7,8,9,11,14,16,17,18,27,28,30,31\}$&
$x_{32,9,48}$&$\{1,3,5,6,7,8,9,15,17,24,30,31\}$\\
\noalign{\hrule height0.8pt}
\end{tabular}
}
\end{center}
\end{table}

\setcounter{table}{5}
\begin{table}[thbp]
\caption{Vectors for $m=5$ (continued)}
\begin{center}
{\footnotesize
\begin{tabular}{c|l|c|l}
\noalign{\hrule height0.8pt}
 & \multicolumn{1}{c|}{Supports} &
 & \multicolumn{1}{c}{Supports} \\
\hline
$x_{32,9,49}$&$\{2,3,5,6,7,12,13,15,17,27,28,29\}$&
$x_{32,9,50}$&$\{2,4,5,6,10,12,13,16,27,28,30,31\}$\\
$x_{32,9,51}$&$\{2,3,4,5,6,11,14,15,17,28,30,31\}$&
$x_{32,9,52}$&$\{1,3,4,7,10,11,13,15,17,24,29,30\}$\\
$x_{32,9,53}$&$\{3,5,7,10,13,14,17,18,24,29,30,32\}$&
$x_{32,9,54}$&$\{1,2,4,5,10,13,14,15,18,24,25,31\}$\\
$x_{32,9,55}$&$\{1,3,7,9,10,11,14,16,21,22,24,28\}$&
$x_{32,9,56}$&$\{3,5,6,7,11,12,21,22,25,29,31,32\}$\\
$x_{32,9,57}$&$\{2,3,4,5,6,8,9,12,14,16,17,18,28,29,31,32\}$&
$x_{32,9,58}$&$\{2,3,4,8,9,10,12,14,17,18,29,30\}$\\
$x_{32,9,59}$&$\{1,3,5,7,8,12,13,15,17,25,28,30\}$&
$x_{32,9,60}$&$\{1,4,5,8,10,12,13,15,18,24,28,30\}$\\
$x_{32,9,61}$&$\{1,2,6,8,9,15,17,21,22,24,25,28\}$&
$x_{32,9,62}$&$\{4,6,7,8,12,13,17,18,25,28,30,32\}$\\
$x_{32,9,63}$&$\{1,4,5,6,9,10,11,14,25,30,31,32\}$&
$x_{32,9,64}$&$\{1,3,4,6,8,9,10,11,14,15,17,28,29,30,31,32\}$\\
$x_{32,9,65}$&$\{1,2,5,7,9,10,12,14,17,18,28,31\}$& 
$x_{32,9,66}$&$\{2,4,7,8,9,15,17,24,25,28,29,30\}$\\
$x_{32,9,67}$&$\{1,2,3,5,7,8,10,11,14,16,17,18,25,29,30,32\}$&
$x_{32,9,68}$&$\{1,2,5,6,7,9,11,14,24,25,28,30\}$\\
$x_{32,9,69}$&$\{6,9,10,12,14,16,21,22,24,25,30,31\}$&
$x_{32,9,70}$&$\{2,3,4,5,6,7,15,16,18,21,22,24,25,28,29,31\}$\\
$x_{32,9,71}$&$\{2,3,4,5,6,8,11,12,13,14,15,16,18,21,22,24,25,28,29,32\}$&
$x_{32,9,72}$&$\{1,2,3,8,9,11,13,16,21,22,25,29\}$\\
$x_{32,9,73}$&$\{2,3,4,5,6,8,10,11,13,15,17,21,22,24,31,32\}$&
$x_{32,9,74}$&$\{1,3,7,8,11,14,15,16,18,24,25,28,29,30,31,32\}$\\
$x_{32,9,75}$&$\{3,4,5,8,10,11,12,13,14,15,17,24,25,28,29,30\}$&
$x_{32,9,76}$&$\{2,4,5,7,8,11,13,15,17,21,22,25,28,29,31,32\}$\\
$x_{32,9,77}$&$\{3,6,7,9,10,11,12,15,17,26,28,30\}$&
$x_{32,9,78}$&$\{2,6,8,11,13,16,24,26,28,29,30,31\}$\\
$x_{32,9,79}$&$\{1,3,7,9,10,11,12,15,17,28,29,30\}$&
$x_{32,9,80}$&$\{1,3,4,8,9,10,12,15,18,24,26,30\}$\\
$x_{32,9,81}$&$\{2,3,8,9,10,12,15,16,18,29,30,32\}$&
$x_{32,9,82}$&$\{1,5,6,8,10,12,13,14,28,29,30,32\}$\\
$x_{32,9,83}$&$\{1,3,5,7,10,11,12,13,17,18,28,31\}$&
$x_{32,9,84}$&$\{1,3,4,6,8,11,13,16,24,26,28,32\}$\\
$x_{32,9,85}$&$\{1,3,4,8,11,13,15,16,17,24,26,32\}$&
$x_{32,9,86}$&$\{2,3,5,7,9,11,13,16,17,18,29,30\}$\\
$x_{32,9,87}$&$\{2,5,6,7,8,10,11,12,13,15,17,21,22,28,29,30\}$&
$x_{32,9,88}$&$\{2,4,5,6,9,12,13,14,15,16,17,24,26,29,30,32\}$\\
$x_{32,9,89}$&$\{1,4,5,7,8,9,12,14,24,27,28,31\}$&
$x_{32,9,90}$&$\{1,3,5,10,13,16,20,21,24,27,28,30\}$\\
$x_{32,9,91}$&$\{2,3,5,8,9,13,14,16,27,30,31,32\}$&
$x_{32,9,92}$&$\{1,2,3,4,17,18,19,20\}$\\
\hline
$y_{32,8,1}$&$\{1,3,4,7,11,13,15,16,17,24,28,29\}$ &&\\
\hline
$y_{32,9,1}$&$\{2,4,5,6,8,9,15,16,17,29,31,32\}$&
$y_{32,9,2}$&$\{1,2,3,4,5,9,13,14,24,26,30,31\}$\\
$y_{32,9,3}$&$\{3,4,5,6,7,11,12,15,17,28,30,32\}$&
$y_{32,9,4}$&$\{2,3,6,8,9,12,15,16,17,24,28,32\}$\\
$y_{32,9,5}$&$\{1,5,6,7,9,10,11,16,26,28,29,30\}$&
$y_{32,9,6}$&$\{1,4,5,6,8,9,10,11,15,16,17,26,28,29,31,32\}$\\
$y_{32,9,7}$&$\{4,5,6,7,8,11,12,15,17,26,28,31\}$&
$y_{32,9,8}$&$\{2,3,5,7,9,10,12,16,26,29,31,32\}$\\
$y_{32,9,9}$&$\{1,2,3,6,10,11,15,16,17,24,30,31\}$&
$y_{32,9,10}$&$\{1,3,6,9,10,11,12,16,24,26,28,30\}$\\
$y_{32,9,11}$&$\{2,7,8,9,10,11,12,16,24,28,30,31\}$&
$y_{32,9,12}$&$\{2,5,7,9,10,12,24,26,28,30,31,32\}$\\
$y_{32,9,13}$&$\{3,5,6,7,9,10,11,12,15,16,17,24,26,28,31,32\}$&
$y_{32,9,14}$&$\{1,2,3,4,8,9,13,14,24,28,29,31\}$\\
$y_{32,9,15}$&$\{2,5,6,8,11,13,14,16,26,28,29,30\}$&
$y_{32,9,16}$&$\{2,4,5,7,8,9,10,12,24,26,30,31\}$\\
\noalign{\hrule height0.8pt}
\end{tabular}
}
\end{center}
\end{table}

\setcounter{table}{5}
\begin{table}[thbp]
\caption{Vectors for $m=5$ (continued)}
\begin{center}
{\footnotesize
\begin{tabular}{c|l|c|l}
\noalign{\hrule height0.8pt}
 & \multicolumn{1}{c|}{Supports} &
 & \multicolumn{1}{c}{Supports} \\
\hline
$y_{32,9,17}$&$\{1,4,5,10,13,14,17,18,24,28,30,32\}$&
$y_{32,9,18}$&$\{2,6,8,9,10,11,12,16,24,26,29,31\}$\\
$y_{32,9,19}$&$\{3,4,6,8,9,10,11,13,26,29,31,32\}$&
$y_{32,9,20}$&$\{2,4,6,8,9,10,11,12,26,28,29,31\}$\\
$y_{32,9,21}$&$\{2,3,6,7,8,10,11,12,24,26,29,30\}$&
$y_{32,9,22}$&$\{1,7,8,10,11,13,24,26,29,30,31,32\}$\\
$y_{32,9,23}$&$\{3,5,7,8,9,10,11,13,26,28,29,31\}$&
$y_{32,9,24}$&$\{1,2,3,6,8,10,11,13,20,21,24,31\}$\\
$y_{32,9,25}$&$\{1,2,3,4,7,8,13,14,20,21,28,29\}$&
$y_{32,9,26}$&$\{2,3,5,7,8,9,11,12,13,14,24,26,29,30,31,32\}$\\
$y_{32,9,27}$&$\{1,6,8,10,11,13,20,21,24,26,28,32\}$&
$y_{32,9,28}$&$\{1,2,4,8,9,10,11,12,15,16,17,24,26,28,29,30\}$\\
$y_{32,9,29}$&$\{1,2,5,6,8,12,13,16,24,26,31,32\}$&
$y_{32,9,30}$&$\{1,2,3,4,5,7,9,16,20,21,26,28\}$\\
$y_{32,9,31}$&$\{1,2,4,7,10,12,14,16,20,21,29,32\}$&
$y_{32,9,32}$&$\{2,3,4,5,7,10,11,13,24,26,30,31\}$\\
$y_{32,9,33}$&$\{3,4,5,7,10,15,17,24,26,28,31,32\}$&
$y_{32,9,34}$&$\{1,2,3,5,6,8,15,16,17,24,29,32\}$\\
$y_{32,9,35}$&$\{1,3,5,6,7,8,9,10,20,21,26,30\}$&
$y_{32,9,36}$&$\{2,3,4,7,8,9,10,11,12,16,20,21,24,26,28,29\}$\\
$y_{32,9,37}$&$\{1,2,4,6,9,10,11,12,15,16,17,24,26,28,29,32\}$&
$y_{32,9,38}$&$\{1,3,5,6,15,16,17,24,26,30,31,32\}$\\
$y_{32,9,39}$&$\{1,2,3,4,8,10,20,21,24,26,28,29\}$&
$y_{32,9,40}$&$\{1,2,3,5,7,8,11,12,15,16,17,24,26,28,29,32\}$\\
$y_{32,9,41}$&$\{3,6,7,8,13,14,20,21,26,28,30,32\}$&
$y_{32,9,42}$&$\{1,5,7,8,10,13,14,16,20,21,26,29\}$\\
$y_{32,9,43}$&$\{1,2,3,5,6,7,13,14,20,21,26,28\}$&
$y_{32,9,44}$&$\{1,4,5,7,9,10,11,13,15,16,17,24,29,30,31,32\}$\\
$y_{32,9,45}$&$\{1,5,6,7,9,11,13,16,27,30,31,32\}$&
$y_{32,9,46}$&$\{1,2,4,5,6,13,14,15,18,27,28,29\}$\\
$y_{32,9,47}$&$\{1,6,8,9,15,16,17,28,29,30,31,32\}$&
$y_{32,9,48}$&$\{2,3,5,6,10,11,12,15,17,24,30,31\}$\\
$y_{32,9,49}$&$\{1,2,3,4,6,7,11,13,15,16,17,24,27,28,30,32\}$&
$y_{32,9,50}$&$\{2,6,9,12,13,15,17,24,27,28,29,32\}$\\
$y_{32,9,51}$&$\{5,6,7,10,11,12,13,14,24,27,29,32\}$&
$y_{32,9,52}$&$\{1,4,5,6,7,8,9,15,18,20,21,24,28,29,30,31\}$\\
$y_{32,9,53}$&$\{2,9,10,11,14,16,17,18,24,29,30,32\}$&
$y_{32,9,54}$&$\{2,4,6,8,9,12,13,16,17,18,27,28\}$\\
$y_{32,9,55}$&$\{5,6,7,8,10,11,13,16,27,28,29,30\}$&
$y_{32,9,56}$&$\{2,7,15,16,18,20,21,24,28,30,31,32\}$\\
$y_{32,9,57}$&$\{4,5,6,9,11,12,15,16,17,25,30,32\}$&
$y_{32,9,58}$&$\{1,4,5,7,8,9,10,15,18,28,30,31\}$\\
$y_{32,9,59}$&$\{2,3,6,8,9,10,11,12,25,28,29,31\}$&
$y_{32,9,60}$&$\{1,6,9,10,12,14,15,16,18,21,22,24,28,29,30,31\}$\\
$y_{32,9,61}$&$\{1,2,3,4,6,9,11,13,15,16,17,21,22,25,29,32\}$&
$y_{32,9,62}$&$\{3,5,6,7,8,12,14,16,17,18,24,30\}$\\
$y_{32,9,63}$&$\{1,2,5,7,9,12,14,16,17,18,25,28\}$&
$y_{32,9,64}$&$\{3,4,9,10,11,12,13,14,17,18,29,30\}$\\
$y_{32,9,65}$&$\{1,4,5,6,7,12,13,15,17,26,28,32\}$&
$y_{32,9,66}$&$\{2,3,4,5,6,9,11,16,24,25,29,32\}$\\
$y_{32,9,67}$&$\{1,4,6,7,10,11,13,16\}$& & \\
\hline
$z_{32,8,1}$&$\{3,4,8,12,13,14,24,28,29,30,31,32\}$&
$z_{32,9,1}$&$\{1,3,6,7,10,12,13,16,27,28,31,32\}$\\
$z_{32,10,1}$&$\{1,3,4,5,8,9,10,16,22,28,29,31\}$&
$z_{32,11,1}$&$\{1,8,9,11,12,15,17,22,28,29,30,32\}$\\
\hline
$z_{32,8,2}$&$\{3,4,8,12,13,14,24,28,29,30,31,32\}$&
$z_{32,9,2}$&$\{1,3,6,7,10,12,13,16,27,28,31,32\}$\\
$z_{32,10,2}$&$\{1,3,4,5,8,9,10,16,22,28,29,31\}$&
$z_{32,11,2}$&$\{6,7,8,9,11,12,15,16,17,27,28,30\}$\\
\noalign{\hrule height0.8pt}
\end{tabular}
}
\end{center}
\end{table}

\end{landscape}

\end{document}